\theoremstyle{plain}
\newtheorem{nonumthm}{Theorem}
\newtheorem{nonumprop}{Proposition}
\newtheorem{nonumcor}{Corollary}
\theoremstyle{remark}
\theoremstyle{definition}
\newcommand{\cl}{\operatorname{cl}}               
\newcommand{\intr}{\operatorname{int}}            
\begin{document}

\title[On the Borel complexity and the complete metrizability of spaces of metrics]{On the Borel complexity and the complete metrizability of spaces of metrics}
\author{Katsuhisa Koshino}
\address[Katsuhisa Koshino]{Faculty of Engineering, Kanagawa University, Yokohama, 221-8686, Japan}
\email{ft160229no@kanagawa-u.ac.jp}
\subjclass[2020]{Primary 54C35; Secondary 54E35, 54H05}
\keywords{admissible metric, sup-metric, absolute Borel class, completely metrizable}
\maketitle

\begin{abstract}
Given a metrizable space $X$, let $AM(X)$ be the space of continuous bounded admissible metrics on $X$,
 which is endowed with the sup-metric.
In this paper, we shall investigate the Borel complexity and the complete metrizability of $AM(X)$ and show that a separable metrizable space $X$ is $\sigma$-compact if and only if $AM(X)$ is completely metrizable.
\end{abstract}

\section*{}

For a metrizable space $X$, let $C(X^2)$ be the space of continuous bounded real-valued functions on $X^2$, whose topology induced by the sup-metric $D$: for any $f, g \in C(X^2)$,
 $$D(f,g) = \sup\{|f(x,y) - g(x,y)| \mid (x,y) \in X^2\}.$$
Denote the subspace consisting of continuous bounded pseudometrics on $X$ by $PM(X)$ and the subspace consisting of bounded admissible metrics on $X$ by $AM(X)$, respectively.
In this paper, we shall study the Borel complexity and the complete metrizability of $AM(X)$.
Recall that the sup-metric $D$ is complete on $C(X^2)$ and $PM(X)$,
 which is closed in $C(X^2)$.
Let $AM^\ast(X)$ be the space of admissible metrics on $X$ with the topology induced by $D$ (namely, the topology of uniform convergence).
Y.~Ishiki \cite{Ish2} showed that $AM^\ast(X)$ is a Baire space,
 which implies that the open subspace $AM(X) \subset AM^\ast(X)$ is also Baire.
The author \cite{Kos20} proved that if $X$ is $\sigma$-compact,
 then $AM(X)$ is a $G_\delta$ set in $C(X^2)$,
 and hence it is completely metrizable, see also \cite{Ish1}.
It is unknown whether $AM(X)$ is always completely metrizable or not.
We shall show that even if $X$ is separable,
 the space $AM(X)$ is not necessarily completely metrizable.

Given a metrizable space $Y$, let $\mathfrak{A}_0(Y)$ be the family of open sets in $Y$, and let $\mathfrak{M}_0(Y)$ be the one of closed sets in $Y$.
For a natural number $n \geq 1$, define inductively the collections
 $$\mathfrak{A}_n(Y) = \Bigg\{\bigcup_{k \in \omega} A_k \ \Bigg| \ A_k \in \bigcup_{m < n} \mathfrak{M}_m(Y)\Bigg\} \text{ and } \mathfrak{M}_n(Y) = \Bigg\{\bigcap_{k \in \omega} A_k \ \Bigg| \ A_k \in \bigcup_{m < n} \mathfrak{A}_m(Y)\Bigg\}.$$
Denote by $\mathfrak{A}_n$ (respectively, $\mathfrak{M}_n$) a class of spaces $X$ satisfying that $X$ is belonging to $\mathfrak{A}_n(Y)$ (respectively, $\mathfrak{M}_n(Y)$) for every metrizable space $Y$ which contains $X$ as a subspace.
We call $\mathfrak{A}_n$ and $\mathfrak{M}_n$ to be the \textit{absolute Borel classes}.
Recall that $\mathfrak{A}_0 = \{\emptyset\}$ and $\mathfrak{M}_0$ is the class of compact metrizable spaces.
Moreover, the class $\mathfrak{M}_1$ is consisting of completely metrizable spaces,
 and the class $\mathfrak{A}_1$ is consisting of $\sigma$-locally compact metrizable spaces ($\sigma$-compact metrizable spaces in the separable case).
We can establish the following:

\begin{nonumthm}
Suppose that $X$ is a metrizable space and $n \geq 1$ is a natural number.
If $AM(X)$ is in $\mathfrak{A}_n$ (respectively, $\mathfrak{M}_n$),
 then $X$ is in $\mathfrak{M}_n$ (respectively, $\mathfrak{A}_n$).
\end{nonumthm}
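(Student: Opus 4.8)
The plan is to convert the statement, via complementation inside the completely metrizable space $PM(X)$, into the assertion that $X$ is homeomorphic to a closed subspace of $PM(X)\setminus AM(X)$, and then to produce such a closed copy by attaching to each point of $X$ a non-admissible pseudometric that is ``degenerate exactly at that point''.

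\emph{Step 1 (passing to the complement).} Recall that $PM(X)$ is closed in the completely metrizable space $C(X^2)$, so $PM(X)$ is an absolute $\mathfrak M_1$-space and in particular $PM(X)\in\mathfrak M_n(C(X^2))$ for every $n\ge1$. Since the absolute Borel classes are precisely the classes preserved in every metrizable superspace, the hypothesis $AM(X)\in\mathfrak A_n$ (resp.\ $\mathfrak M_n$) yields $AM(X)\in\mathfrak A_n(PM(X))$ (resp.\ $\mathfrak M_n(PM(X))$); taking complements inside the metrizable space $PM(X)$ and using de Morgan for the Borel hierarchy, $PM(X)\setminus AM(X)\in\mathfrak M_n(PM(X))$ (resp.\ $\mathfrak A_n(PM(X))$). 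By the standard transitivity of the Borel classes through a completely metrizable intermediate space ($PM(X)$ being closed in $C(X^2)$), $PM(X)\setminus AM(X)$ belongs to the absolute class $\mathfrak M_n$ (resp.\ $\mathfrak A_n$). Thus it suffices to prove: if $PM(X)\setminus AM(X)\in\mathfrak M_n$ (resp.\ $\mathfrak A_n$), then $X\in\mathfrak M_n$ (resp.\ $\mathfrak A_n$). For $n\ge1$ both $\mathfrak A_n$ and $\mathfrak M_n$ are hereditary with respect to closed subspaces (and, more generally, stable under perfect maps), so this follows once $X$ is realized as a closed subspace of $PM(X)\setminus AM(X)$. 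If $X$ is compact there is nothing to do, since then $X\in\mathfrak M_0\subseteq\mathfrak M_n\cap\mathfrak A_n$; so assume $X$ is not compact and fix a bounded admissible metric $d\le1$ on $X$, with completion $\hat X$.

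\emph{Step 2 (the construction).} To each $a\in X$ associate a continuous bounded pseudometric $\rho_a$ obtained from $d$ by a localized degeneration centered at $a$ --- concretely, by collapsing a small $d$-ball $\bar B_d(a,\varepsilon(a))$ to a point and passing to the resulting quotient pseudometric, where the radius function $\varepsilon(\cdot)$ is chosen so that this ball always contains a second point (at points isolated in $X$, where no ball can be collapsed, one instead makes $a$ the limit of a chosen sequence of other points, matching the two regimes continuously at the interface with the non-isolated part of $X$). One then verifies:
\begin{enumerate}
\item each $\rho_a$ is a continuous bounded pseudometric that is not admissible, so $\rho_a\in PM(X)\setminus AM(X)$;
\item $a\mapsto\rho_a$ is injective --- the degeneration ``remembers'' its center --- and Lipschitz from $(X,d)$ into $(PM(X),D)$, hence a homeomorphism onto its image;
\item $\{\rho_a\mid a\in X\}$ is closed in $PM(X)\setminus AM(X)$: if $(a_i)$ is a sequence in $X$ with $\rho_{a_i}\to\rho$ in the complete space $PM(X)$, then either $(a_i)$ converges in $X$ to some $a$ and $\rho=\rho_a$, or $(a_i)$ is $d$-Cauchy with limit in $\hat X\setminus X$, in which case the degeneration escapes $X$ and the limit $\rho$ is an admissible metric, hence not in $PM(X)\setminus AM(X)$.
\end{enumerate}
Granting (1)--(3), $\{\rho_a\mid a\in X\}$ is the desired closed copy of $X$, and the theorem follows.

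\emph{Main obstacle.} The crux is (3), the \emph{properness} of the parametrization $a\mapsto\rho_a$ relative to the subspace $PM(X)\setminus AM(X)$: one must design $\varepsilon(\cdot)$ (and the auxiliary data used at isolated points) so that simultaneously (i) $\rho_a$ is genuinely non-admissible for every $a$, (ii) the assignment stays faithful even where $X$ is ``thin'' or ultrametric-like, so that injectivity and the homeomorphism-onto-image property survive, and --- the hardest part --- (iii) every limit pseudometric coming from a $d$-Cauchy non-convergent sequence is admissible, i.e.\ the degeneration really disperses in $\hat X\setminus X$, so that no ``bad'' limit point is acquired when the image is closed up in $PM(X)$. (When $X$ is completely metrizable one may take $d$ complete and (iii) is vacuous.) Controlling these limits is the technical heart of the argument; Step 1, the elementary Lipschitz and injectivity estimates, and the compact case should be routine.
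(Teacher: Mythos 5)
Your Step 1 reduction already fails at exactly the case the paper cares about most. The passage from ``$PM(X)\setminus AM(X)\in\mathfrak{A}_n(PM(X))$ with $PM(X)$ completely metrizable'' to ``$PM(X)\setminus AM(X)$ is absolutely $\mathfrak{A}_n$'' is valid only for $n\geq 2$ (this is why the paper's citation of \cite[Theorem~5.11.2]{Sakaik12} carries the restriction $n\geq 2$ on the additive side): an $F_\sigma$ subset of a completely metrizable space need not be absolutely $F_\sigma$. For instance $\mathbb{P}$ is closed in itself, hence $F_\sigma$ in a Polish space, yet it is not $F_\sigma$ in $\mathbb{R}$, and absolute $\mathfrak{A}_1$ means $\sigma$-locally compact. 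Consequently, even granting Step 2 in full, your argument gives nothing in the direction $AM(X)\in\mathfrak{M}_1\Rightarrow X\in\mathfrak{A}_1$ --- precisely the implication ``$AM(X)$ completely metrizable $\Rightarrow X$ $\sigma$-compact'' that is the paper's main corollary; you would only conclude that $X$ is $F_\sigma$ in some completely metrizable space, which every Polish $X$ satisfies vacuously.

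Step 2(iii) is, moreover, not just the ``technical heart'' but is unattainable by the proposed mechanism for general $X$. For the image $\{\rho_a\}$ to be closed in $PM(X)\setminus AM(X)$, every $d$-Cauchy sequence $(a_i)$ escaping $X$ must produce an admissible limit, and this forces $\varepsilon(a_i)\to 0$: if $\varepsilon(a_i)$ stayed bounded below, the limit pseudometric would still identify infinitely many of the $a_i$ and would not even be a metric. Combining this with the continuity of $\varepsilon$ that your Lipschitz/embedding claim in (2) requires, the open sets $U_k=\{a\in X\mid\varepsilon(a)>1/k\}$ have closures in the completion $\hat{X}$ contained in $X$ and cover $X$, so $X$ would be $F_\sigma$ in $\hat{X}$. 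Thus the construction can only exist when $X\in\mathfrak{A}_1(\hat{X})$, which already fails for $X=[0,1]\cap\mathbb{P}$ with the euclidean metric and fails for every choice of $d$ once $X$ has absolute Borel class two or higher --- exactly the cases where the theorem has content for larger $n$. The paper avoids both problems by running the duality in the opposite direction: it fixes a complete $Y$ witnessing $X\notin\mathfrak{A}_n$, extracts a piece $Z'$ of $\cl_Y{(X\setminus\intr_Y{X})}\setminus X$ with $Z'\notin\mathfrak{M}_n$, and embeds $Z'$ as a \emph{closed subspace of $AM(X)$ itself} via metrics built from $\rho(\cdot,z)$ with $z\in Z'$ and the Bessaga--Banakh extension operator; there the escaping-limit problem resolves itself, since a limit parameter landing inside $X$ would make a non-convergent sequence $\rho$-convergent and hence the limit metric non-admissible. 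To repair your proof you would need to parametrize by the complement of $X$ in a completion and land in $AM(X)$, rather than parametrize by $X$ and land in its complement.
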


As a corollary, we can give a necessary and sufficient condition on the complete metrizability of $AM(X)$ as follows:

\begin{nonumcor}
Let $X$ be a separable metrizable space.
Then $X$ is $\sigma$-compact if and only if $AM(X)$ is completely metrizable.
\end{nonumcor}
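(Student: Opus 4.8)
The plan is to obtain the Corollary as an immediate consequence of the Theorem above together with the author's earlier result in \cite{Kos20}; no new argument about spaces of metrics is needed, only the descriptions of the absolute Borel classes $\mathfrak{M}_1$ and $\mathfrak{A}_1$ recalled just before the Theorem. I will use those two facts freely: $\mathfrak{M}_1$ is exactly the class of completely metrizable spaces, and, for separable metrizable spaces, membership in $\mathfrak{A}_1$ is equivalent to $\sigma$-compactness (in general $\mathfrak{A}_1$ is the class of $\sigma$-locally compact metrizable spaces, and a separable one is Lindelöf, hence $\sigma$-compact).

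For the direction ``$X$ $\sigma$-compact $\Longrightarrow$ $AM(X)$ completely metrizable'', I would simply invoke \cite{Kos20}: when $X$ is $\sigma$-compact, $AM(X)$ is a $G_\delta$ subset of $C(X^2)$. Since the sup-metric $D$ is complete on $C(X^2)$, the space $C(X^2)$ is completely metrizable, and a $G_\delta$ subset of a completely metrizable space is completely metrizable; hence $AM(X)$ is completely metrizable. Note that separability of $X$ is not used in this implication.

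For the converse ``$AM(X)$ completely metrizable $\Longrightarrow$ $X$ $\sigma$-compact'', I would read the hypothesis as $AM(X) \in \mathfrak{M}_1$ and apply the Theorem with $n = 1$, obtaining $X \in \mathfrak{A}_1$. Since $X$ is separable and metrizable, this says precisely that $X$ is $\sigma$-compact, which finishes the proof.

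Because all the substance is carried by the Theorem, I do not anticipate a genuine obstacle in the Corollary itself; the only care required is to keep straight the two characterizations $\mathfrak{M}_1 = \{\text{completely metrizable spaces}\}$ and (in the separable case) $\mathfrak{A}_1 = \{\text{$\sigma$-compact metrizable spaces}\}$, both classical and both already recorded in the excerpt. The real difficulty lies upstream, in establishing the Theorem at the level $n = 1$ — that is, in showing that complete metrizability of $AM(X)$ forces $X$ into $\mathfrak{A}_1$.
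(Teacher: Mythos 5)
Your proposal is correct and follows exactly the paper's own proof: the ``only if'' direction via \cite{Kos20} (that $AM(X)$ is $G_\delta$ in the completely metrizable space $C(X^2)$ when $X$ is $\sigma$-compact), and the ``if'' direction by applying the Theorem with $n=1$ together with the stated identifications $\mathfrak{M}_1 = \{\text{completely metrizable spaces}\}$ and $\mathfrak{A}_1 = \{\text{$\sigma$-compact spaces}\}$ in the separable case. Nothing to add.
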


\begin{proof}
The ``only if'' part follows from \cite[Proposition~3]{Kos20} and the ``if'' part follows form the above theorem in the case where $n = 1$.
\end{proof}

Firstly, we give the following example,
 which contains a good technique to prove Theorem.

\begin{proof}[{\bf Example}]
Equipping the usual metric with the real line,
 let $Q = [0,1] \cap \mathbb{Q}$ and $P = [0,1] \cap \mathbb{P}$,
 where $\mathbb{Q}$ is the set of rationals and $\mathbb{P}$ is the set of irrationals.
Suppose that $X$ is the topological sum $P \oplus (0,1]$.
Then $AM(X)$ is not completely metrizable.
Since $Q$ is not completely metrizable,
 it is sufficient to show that $AM(X)$ contains a closed topological copy of $Q$.
For each $q \in Q$, define $d_q \in AM(X)$ by
 $$d_q(x,y) = \left\{
 \begin{array}{ll}
  |x - y| & \text{ if } (x,y) \in P^2 \text{ or } (0,1]^2,\\
  |x - q| + y & \text{ if } x \in P \text{ and } y \in (0,1],\\
  x + |y - q| & \text{ if } x \in (0,1] \text{ and } y \in P.
 \end{array}
 \right.$$
Put $S = \{d_q \in AM(X) \mid p \in Q\}$,
 so it is a closed subset of $AM(X)$ and is isometric to $Q$.

Firstly, we shall prove that $S$ is closed in $AM(X)$.
Take any sequence $\{d_{q_n}\} \subset S$ that is converging to some $d \in AM(X)$.
It remains to show that $d \in S$.
When $(x,y)$ is in $P^2$ or $(0,1]^2$,
 $$|d(x,y) - |x - y|| = |d(x,y) - d_{q_n}(x,y)| \leq D(d,d_{q_n}) \to 0$$
 as $n \to \infty$,
 which means that $d(x,y) = |x - y|$.
Replacing $\{q_n\}$ with a subsequence converging in $[0,1]$, we can find a point $z \in [0,1]$ so that $q_n \to z$ as $n$ tends to $\infty$.
For every $x \in P$ and every $y \in (0,1]$, since
 $$|d(x,y) - (|x - q_n| + y)| = |d(x,y) - d_{q_n}(x,y)| \leq D(d,d_{q_n}),$$
 $d(x,y) = |x - z| + y$.
In the case that $x \in (0,1]$ and $y \in P$, we have that $d(x,y) = x + |y - z|$ similarly.
Then $z \in Q$.
Otherwise, $z \in P$ and
 $$d\bigg(z,\frac{1}{n}\bigg) = |z - z| + \frac{1}{n} = \frac{1}{n} \to 0$$
 for the sequence $\{1/n\} \subset (0,1]$,
 that is not converging in $X$.
Therefore $d \notin AM(X)$,
 which is a contradiction.
It follows that $z \in Q$,
 and hence $d = d_z \in S$.

Next, we will show that $S$ is isometric to $Q$.
To prove it, let $i : S \to Q$ be a map defined by $i(d_q) = q$,
 and fix any $q, p \in Q$.
If $(x,y)$ is belonging to $P^2$ or $(0,1]^2$,
 then
 $$|d_q(x,y) - d_p(x,y)| = ||x - y| - |x - y|| = 0.$$
Moreover, if $x \in P$ and $y \in (0,1]$,
 then
 $$|d_q(x,y) - d_p(x,y)| = |(|x - q| + y) - (|x - p| + y)| \leq |q - p|.$$
Similarly, for each $x \in (0,1]$ and each $y \in P$, $|d_q(x,y) - d_p(x,y)| \leq |q - p|$.
It follows that $D(d_q,d_p) \leq |q - p|$.
We may assume that $q \leq p$.
In the case where $0 < q$, taking any $x \in [0,q] \cap P$ and any $y \in (0,1]$, we have that
 $$|d_q(x,y) - d_p(x,y)| = |(|x - q| + y) - (|x - p| + y)| = |q - p|.$$
In the case that $p < 1$, fix any $x \in [p,1] \cap P$ and any $y \in (0,1]$,
 so
 $$|d_q(x,y) - d_p(x,y)| = |(|x - q| + y) - (|x - p| + y)| = |q - p|.$$
When $q = 0$ and $p = 1$,
 for every number $\epsilon \in (0,1/2)$, letting any $x \in [0,\epsilon] \cap P$ and any $y \in (0,1]$, observe that
 $$|d_q(x,y) - d_p(x,y)| = |(|x - q| + y) - (|x - p| + y)| \geq |q - p| - 2\epsilon.$$
Consequently, $D(d_q,d_p) = |q - p|$,
 and hence the map $i$ is isometry.
It conclude that $AM(X)$ is not completely metrizable.
\end{proof}

In this example, the subset $P$ is closed in $X$ and is homeomorphic to $\mathbb{P}$.
More generally, by virtue of the similar argument, we have the following:

\begin{nonumprop}
If a metrizable space $X$ contains a closed subset homeomorphic to $\mathbb{P}$,
 then $AM(X)$ is not completely metrizable.
\end{nonumprop}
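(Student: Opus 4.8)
The plan is to produce a closed subspace of $AM(X)$ homeomorphic to the rationals $\mathbb{Q}$. Since a closed subspace of a completely metrizable space is completely metrizable whereas $\mathbb{Q}$ is not, this immediately yields the proposition. Write $P = [0,1] \cap \mathbb{P}$ and $Q = [0,1] \cap \mathbb{Q}$, and let $F \subseteq X$ be closed and homeomorphic to $\mathbb{P}$. As $\mathbb{P}$ is homeomorphic to $\mathbb{P} \oplus \mathbb{P}$, we can split $F = F_1 \cup F_2$ into two disjoint subsets that are clopen in $F$ — hence closed in $X$ — and each homeomorphic to $\mathbb{P}$, and so also to $P$. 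Fix homeomorphisms $g \colon P \to F_1$ and $h \colon P \to F_2$, and put $a_n = h\big(\sqrt{2}/(n+1)\big)$ for $n \geq 1$; since $\sqrt{2}/(n+1) \to 0 \notin P$ and $F_2$ is closed in $X$, the set $\{a_n \mid n \geq 1\}$ is closed and discrete in $X$. Finally, starting from any bounded compatible metric on $X$ and adding $3\,|u(x) - u(y)|$, where $u \colon X \to [0,1]$ is continuous with $u|_{F_1} \equiv 0$ and $u|_{F_2} \equiv 1$, we obtain a bounded compatible metric $\rho$ on $X$ with $\rho(F_1, F_2) \geq 3$.

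For each $t \in [0,1]$ define $d_t \colon X^2 \to [0,\infty)$ by letting $d_t(x,y)$ be the infimum of $\sum_i \delta_t(z_i, z_{i+1})$ over all finite chains $x = z_0, \dots, z_k = y$ in $X$, where $\delta_t = \rho$ except that $\delta_t(g(s), h(c)) = \delta_t(h(c), g(s)) = |s - t| + c$ for $s, c \in P$; since $|s - t| + c < 3 \leq \rho(g(s), h(c))$, only the distances between $F_1$ and $F_2$ are altered. (Equivalently, $d_t$ is the largest pseudometric on $X$ that is $\leq \rho$ and is $\leq |s-t|+c$ on the pair $(g(s),h(c))$.) Two properties must be verified. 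First, each $d_t$ is a bounded metric on $X$, and it is admissible precisely when $t \in Q$; for $t = z \in P$ admissibility fails because $d_z(a_n, g(z)) \leq |z - z| + \sqrt{2}/(n+1) \to 0$, so $a_n \to g(z)$ in the $d_z$-topology even though $\{a_n\}$ is closed and discrete in $X$. Second, $t \mapsto d_t$ is injective and Lipschitz from $[0,1]$ to $(C(X^2), D)$: the parameter $t$ enters $\delta_t$ only through the $1$-Lipschitz term $|s - t| + c$, and a near-optimal chain for $d_t(x,y)$ uses edges between $F_1$ and $F_2$ only a bounded number of times, with a bound independent of $x$, $y$, $t$ (consecutive such edges telescope, while returning from $F_2$ to $F_1$ through $X \setminus F$ costs at least $\rho(F_1, F_2) \geq 3$, so the length of the chain bounds the number of crossings).

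Granting these, $t \mapsto d_t$ is a continuous injection of the compact space $[0,1]$ into $C(X^2)$, hence an embedding onto a compact, and therefore closed, subset $K$ of $C(X^2)$. By the first property, $K \cap AM(X) = \{d_t \mid t \in Q\}$; this set is closed in $AM(X)$ and, being the homeomorphic image of $Q$, is homeomorphic to $Q \cong \mathbb{Q}$. Thus $AM(X)$ contains a closed copy of $\mathbb{Q}$, so it is not completely metrizable.

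The main obstacle is the first property, specifically that the amalgamated $d_t$ really is a metric (positive-definiteness of the infimum) and that for rational $t$ its topology coincides with the topology of $X$ — the delicate points being at the members of $F$ and along the boundary between $F$ and $X \setminus F$. This uses that $\rho$ keeps $F_1$ and $F_2$ uniformly apart, that the Euclidean metric on $P$ induces the topology of each $F_i$, and that for $t \in Q$ and each irrational $s$ the number $|s - t|$ is positive, so that no honest $X$-convergent sequence in $F_2$ is dragged into $F_1$. This is the same phenomenon as in the Example, where it is transparent only because $X = P \oplus (0,1]$ is already a topological sum and $d_q$ is given by an explicit formula rather than an infimum; in the general case one may instead imitate the Example's sequential proof that $\{d_t \mid t \in Q\}$ is closed, which still requires the continuity of $t \mapsto d_t(x,y)$.
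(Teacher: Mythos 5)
Your overall architecture is sound and genuinely different from the paper's: you split the closed copy of $\mathbb{P}$ into two closed pieces $F_1,F_2$, build a one-parameter family $d_t$ by amalgamating $\rho$ with the cross-distances $|s-t|+c$, and then use compactness of $[0,1]$ to get a closed copy of $\mathbb{Q}$ as $K\cap AM(X)$. The paper instead works with the explicit formula only on the topological sum $A_1\oplus A_2$ of the two closed copies of $\mathbb{P}$ (where no amalgamation is needed and admissibility is checked exactly as in the Example), and then transports the resulting copy of $[2/3,1]\cap\mathbb{Q}$ into $AM(X)$ by citing the Bessaga--Banakh continuous extension operator $e\colon AM(A_1\oplus A_2)\to AM(X)$ with $e(d)|_{(A_1\oplus A_2)^2}=d$; closedness of the image is then proved by a sequential Cauchy argument rather than by compactness. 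That extension operator is precisely the tool that disposes of what you yourself call ``the main obstacle,'' so your proposal is in effect reproving a special case of it by hand --- a legitimate plan, but the hard part is then yours to carry out, and you do not carry it out.

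Two concrete gaps remain. First, the claim that $d_t$ is an (admissible, for $t\in Q$) metric is asserted, not proved. It is in fact true, and the clean way to see it is to note that a shortest chain passes through the ``portal'' at most once, so that $d_t(x,y)=\min\bigl(\rho(x,y),\,\mu_t(x)+\mu_t(y)\bigr)$ with $\mu_t(x)=\min\bigl(\inf_{s}(\rho(x,g(s))+|s-t|),\,\inf_{c}(\rho(x,h(c))+c)\bigr)$; closedness of $F_1,F_2$ in $X$ gives $\mu_t>0$ everywhere when $t\in Q$, whence positive-definiteness and admissibility (since $d_t\le\rho$ and $\mu_t(x)>0$ forces $d_t(x_n,x)\to 0\Rightarrow\rho(x_n,x)\to 0$). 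Without some such closed form your ``bounded number of crossings'' sketch does not yet yield these facts. Second, and more seriously, your justification of injectivity is not an argument: that $t$ enters through a $1$-Lipschitz term gives the Lipschitz estimate but nothing about injectivity. From the formula above, $d_t(g(s),h(c))=\mu_t(g(s))+\beta(h(c))$ with $\mu_t(g(s))=\inf_{s'}\bigl(\rho(g(s),g(s'))+|s'-t|\bigr)$, and since $\rho|_{F_1}$ is an arbitrary admissible metric on a copy of $\mathbb{P}$, the ``ends'' of $F_1$ at parameters $t$ and $t'$ may be $\rho$-asymptotic, so nothing in your setup rules out $\mu_t=\mu_{t'}$ for $t\ne t'$. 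This is repairable: add to $\rho$ the pseudometric $|v(x)-v(y)|$, where $v\colon X\to[0,1]$ is a continuous (Tietze) extension of $g^{-1}$, so that $\rho(g(s),g(s'))\ge|s-s'|$ and hence $\mu_t(g(s))=|s-t|$ exactly, which makes injectivity immediate. With that modification and the closed-form identity above, your proof goes through; as written, it does not.
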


\begin{proof}
By the assumption, there exists a closed embedding $h : \mathbb{P} \to X$,
 so let $A_1 = h([0,1/3] \cap \mathbb{P})$ and $A_2 = h([2/3,1] \cap \mathbb{P})$.
Set $Q' = [2/3,1] \cap \mathbb{Q}$ and define a function $i : Q' \to AM(A_1 \oplus A_2)$ by for every $q \in Q'$,
 $$i(q)(x,y) = \left\{
 \begin{array}{ll}
  |h^{-1}(x) - h^{-1}y| & \text{ if } (x,y) \in A_1^2 \text{ or } A_2^2,\\
  h^{-1}(x) + |h^{-1}(y) - q| & \text{ if } x \in A_1 \text{ and } y \in A_2,\\
  |h^{-1}(x) - q| + h^{-1}(y) & \text{ if } x \in A_2 \text{ and } y \in A_1.
 \end{array}
 \right.$$
It follows form the same argument as the above example that $i$ is an isometric embedding.

According to \cite[Theorem~2]{BB} (see also \cite{Bess, Ba1, Pi, Z}), we can obtain a continuous map $e : AM(A_1 \oplus A_2) \to AM(X)$ such that $e(d)|_{(A_1 \oplus A_2)^2} = d$ for every $d \in AM(A_1 \oplus A_2)$.
Then the composition $e \circ i$ is a closed embedding.
For any $q, p \in Q'$ with $q < p$ and any $x \in A_1$, letting $y = h(r)$ with $r \in [q,(2q + p)/3] \cap \mathbb{P}$, observe that
\begin{align*}
 |e \circ i(q)(x,y) - e \circ i(p)(x,y)| &= |i(q)(x,y) - i(p)(x,y)|\\
 &= |(h^{-1}(x) + |h^{-1}(y) - q|) - (h^{-1}(x) + |h^{-1}(y) - p|)|\\
 &> (p - q)/3 > 0,
\end{align*}
 which implies that $e \circ i$ is injective.
It remains to prove that $e \circ i$ is a closed map.
Suppose that $\{q_n\}$ is a sequence in $Q'$ such that $e \circ i(q_n)$ is converging to some metric $d \in AM(X)$.
Then there exists a subsequence of $\{q_n\}$ that converges to some point $z \in Q'$.
Indeed, replace $\{q_n\}$ with a converging subsequence to some $z \in [2/3,1]$.
By the similar way as Example, for every $x \in A_1$ and $y \in A_2$, $d(x,y) = h^{-1}(x) + |h^{-1}(y) - z|$.
Assume that $z \in \mathbb{P}$.
Take any sequence $\{p_n\}$ consisting irrational numbers so that $1/3 \geq p_n \to 0$ as $n \to \infty$,
 so
 $$d(h(z),h(p_n)) = p_n + |z - z| = p_n \to 0.$$
On the other hand, the sequence $\{h(p_n)\}$ is not converging in $A_1 \oplus A_2$,
 which is a contradiction.
Hence $z \in Q'$ and $e \circ i$ is a closed embedding.
Since $AM(X)$ contains a closed subset homeomorphic to $Q'$,
 which is not completely metrizable,
 $AM(X)$ is also not completely metrizable.
\end{proof}

Every space homeomorphic to some member of an absolute Borel class also belongs to it,
 and all closed subspaces of some space in an absolute Borel class also belong to it.
According to \cite[4.5.8~(a)]{E}, if a space is a union of a locally finite family consisting of subspaces belonging to an absolute Borel class,
 then it is also in the same class.
It is known that for a metrizable space $X$, $X \in \mathfrak{A}_n$, $n \geq 2$, (respectively, $X \in \mathfrak{M}_n$, $n \geq 1$,) if and only if $X \in \mathfrak{A}_n(Y)$ (respectively, $\mathfrak{M}_n(Y)$) for some completely metrizable space $Y$, refer to \cite[Theorem~5.11.2]{Sakaik12}.
Observe that if $X \notin \mathfrak{A}_1$,
 then there exists a completely metrizable space $Y$ such that $X \notin \mathfrak{A}_1(Y)$.
For spaces $A \subset Y$, denote the interior and closure of $A$ in $Y$ by $\intr_Y{A}$ and $\cl_Y{A}$, respectively.
Now we will show Theorem.

\begin{proof}[Proof of Theorem]
We only prove it in the case that $AM(X) \in \mathfrak{M}_n$.
Assume that $X$ is not belonging to $\mathfrak{A}_n$ and take a complete metric space $Y = (Y,\rho)$ such that $X \notin \mathfrak{A}_n(Y)$.
Then the subset $B = X \setminus \intr_Y{X} \notin \mathfrak{A}_n(Y)$,
 and hence the subspace $Z = \cl_Y{(X \setminus \intr_Y{X})} \setminus X \notin \mathfrak{M}_n$.
Note that $\cl_Y{(X \setminus \intr_Y{X})} = B \cup Z$ is complete and it has a locally finite cover $\mathcal{C}$ consisting of closed subsets of $Y$ satisfying the following:
 \begin{enumerate}
  \item there is $C \in \mathcal{C}$ such that $Z \cap C \notin \mathfrak{M}_n$;
  \item there exists $C' \in \mathcal{C}$ such that $Z \cap C' \neq \emptyset$ but $C \cap C' = \emptyset$.
 \end{enumerate}
Fix any point $a \in Z \cap C'$ and find a sequence $\{a_n\} \subset X \cap C'$ converging to the point $a$.
Let $A = \{a_n\}$, $B' = B \cap C$ and $Z' = Z \cap C$.
Remark that $A$ and $B'$ are closed in $X$.
Define a continuous function $i : Z' \to AM(A \oplus B')$ by
 $$i(z)(x,y) = \left\{
 \begin{array}{ll}
  \rho(x,y) & \text{ if } (x,y) \in A^2 \text{ or } B'^2,\\
  \rho(x,a) + \rho(y,z) & \text{ if } x \in A \text{ and } y \in B',\\
  \rho(x,z) + \rho(y,a) & \text{ if } x \in B' \text{ and } y \in A.
 \end{array}
 \right.$$

Choosing an extending map $e : AM(A \oplus B') \to AM(X)$ as in Proposition, we can get an closed embedding $e \circ i$.
Indeed, let any distinct points $z_1, z_2 \in Z'$.
Since $B$ is dense in $\cl_Y{(X \setminus \intr_Y{X})}$,
 find a point $x \in B'$ with $\rho(x,z_1) \leq \rho(z_1,z_2)/3$.
For every $y \in A$,
\begin{align*}
 |e \circ i(z_1)(x,y) - e \circ i(z_2)(x,y)| &= |i(z_1)(x,y) - i(z_2)(x,y)|\\
 &= |(\rho(x,z_1) + \rho(y,a)) - (\rho(x,z_2) + \rho(y,a))|\\
 &\geq \rho(z_1,z_2)/3 > 0,
\end{align*}
 and hence $e \circ i$ is an injection.
To prove that $e \circ i$ is a closed map, take any sequence $\{z_n\}$ in $Z'$ so that $e \circ i(z_n)$ is converging to some $d \in AM(X)$.
As is observed in the above inequality,
 for all natural numbers $n$ and $m$, $\rho(z_n,z_m) \leq 3D(e \circ i(z_n),e \circ i(z_m))$.
Thus $\{z_n\}$ is a Cauchy sequence in the complete metric space $\cl_Y(X \setminus \intr_Y{X}) \cap C$,
 so it is converging to some $z \in \cl_Y(X \setminus \intr_Y{X}) \cap C$.
Supposing that $z \in B'$,
 we have that
 $$d(z,a_n) = \rho(z,z) + \rho(a_n,a) \to 0,$$
 but $\{a_n\}$ is not converging in $A \oplus B'$.
This is a contradiction.
Hence the point $z \in Z'$ and the composition $e \circ i$ is a closed embedding.
Therefore $AM(X)$ admits a closed embedding from $Z'$,
 that is not belonging to $\mathfrak{M}_n$.
Consequently, the space $AM(X) \notin \mathfrak{M}_n$.
The proof is completed.
\end{proof}

\begin{proof}[{\bf Remark}]
Theorem holds on $AM^\ast(X)$ because $AM(X)$ is open in $AM^\ast(X)$ and any open subspace of a member in $\mathfrak{A}_n$ (respectively, $\mathfrak{M}_n$), $n \geq 1$, is also belonging to $\mathfrak{A}_n$ (respectively, $\mathfrak{M}_n$).
\end{proof}

\subsection*{Acknowledgements}

This study is motivated by some works of Yoshito Ishiki,
 and the author would like to thank him for his helpful advice on the non-separable case of Theorem and on the unbounded case in Remark.


\begin{thebibliography}{9}
 \bibitem{Ba1} T.~Banakh, \textit{$AE(0)$-spaces and regular operators extending (averaging) pseudometrics}, Bull. Pol. Acad. Sci. Math. \textbf{42} (1994), 197--206.
 \bibitem{Bess} C.~Bessaga, \textit{On linear operators and functors extending pseudometrics}, Fund. Math. \textbf{142} (1993), no.~2, 101--122.
 \bibitem{BB} C.~Bessaga and T.~Banakh, \textit{On linear operators extending [pseudo]metrics}, Bull. Pol. Acad. Sci. Math. \textbf{48} (2000), no.~1, 35--49.
 \bibitem{E} R.~Engelking, General Topology, Revised and Complete Edition, Sigma Ser. in Pure Math., \textbf{6}, Heldermann Verlag, Berlin, 1989.
 \bibitem{Ish1} Y.~Ishiki, \textit{An interpolation of metrics and spaces of metrics}, arXiv: 2003.13227 [math.MG].
 \bibitem{Ish2} Y.~Ishiki, \textit{Spaces of metrics are Baire}, arXiv: 2402.04565 [math.MG].
 \bibitem{Kos20} K.~Koshino, \textit{Recognizing the topologies of spaces of metrics with the topology of uniform convergence}, Bull. Pol. Acad. Sci. Math. \textbf{70} (2022), 165--171.
 \bibitem{Pi} O.~Pikhurko, \textit{Extending metrics in compact pairs}, Mat. Stud. \textbf{3} (1994), 103--106.
 \bibitem{Sakaik12} K.~Sakai, Topology of Infinite-Dimensional Manifolds, Springer, SMM, Springer, Tokyo, 2020.
 \bibitem{Z} M.~Zarichnyi, \textit{Regular linear operators extending metrics: a short proof}, Bull. Polish Acad. Sci. Math. \textbf{44} (1996), no.~3, 267--269.
\end{thebibliography}
\end{document}